\documentclass[11pt,reqno]{amsart}

\usepackage[T1]{fontenc}
\usepackage{lmodern}
\usepackage{microtype}
\usepackage{amsmath,amssymb,amsthm,mathtools}
\usepackage{enumitem}
\usepackage{xcolor}
\usepackage[colorlinks=true,linkcolor=blue!55!black,citecolor=green!40!black,urlcolor=blue!60!black]{hyperref}
\usepackage[nameinlink,capitalize,noabbrev]{cleveref}

\newtheorem{theorem}{Theorem}[section]
\newtheorem{lemma}[theorem]{Lemma}

\theoremstyle{definition}

\theoremstyle{remark}

\newcommand{\EE}{\mathbb E}
\newcommand{\PP}{\mathbb P}

\title[The toughness of random graphs]{The toughness of random graphs}

\author{Guang Li, Wenqian Zhang}
\address{School of Mathematics and Statistics, Shandong University of Technology, Zibo 255000, China}
\hypersetup{
  pdftitle={The toughness of random graphs},
  pdfauthor={Guang Li, Wenqian Zhang},
  pdfsubject={Random graphs and graph toughness},
  pdfkeywords={toughness, random graph, independence number, vertex cut}
}
\thanks{Supported by the National Natural Science Foundation of China (No.12301449, 12301448) and the Scientific Innovation Project for Young Scientists in Shandong Provincial Universities (No.2024KJG014).}
	\thanks{Corresponding author: Wenqian Zhang (zhangwq@pku.edu.cn)}
\subjclass[2020]{05C80, 05C40, 05C42}
\keywords{Random graph, toughness,  independence number, connectivity}

\begin{document}

\begin{abstract}
For a connected and non-complete  graph $G$ of order $n$, its toughness is defined as
\[
 \tau(G)=\min\bigl\{|S|/c(G-S):S\subseteq V(G),\ c(G-S)>1\bigr\},
\]
where $c(G-S)$ denotes the number of components of $G-S$. Let $\alpha(G)$ denote the independence number of $G$. An elementary bound on toughness is 
$$\tau(G)\leq\frac{n-\alpha(G)}{\alpha(G)}.$$
 Fix $p\in(0,1)$, and let $G(n,p)$ be the binomial random graph on vertex set $[n]$. Set $a=\alpha(G(n,p))$. In this paper, we mainly prove that
\[
 \tau(G(n,p))\in\left\{\frac{n-a}{a},\frac{n-a-1}{a}\right\}
\] 
with high probability.

\end{abstract}

\maketitle

\section{Introduction}

All graphs considered in this paper are finite and simple.  We use $\alpha(G)$ for the independence number, $\kappa(G)$ for vertex connectivity, and $c(G)$ for the number of components of $G$. The toughness of graphs was introduced by Chv\'atal \cite{Chvatal1973} as a global measure of resistance to vertex deletion.  If $G$ is a  connected and non-complete graph, then its toughness is defined as
\begin{equation}\label{eq:toughness-definition}
 \tau(G)=\min_{\substack{S\subseteq V(G)\\c(G-S)>1}}
 \frac{|S|}{c(G-S)}.
\end{equation}
  This parameter is closely related with Hamilton cycles, factors, expansion, and separator structure of graphs (see the survey paper \cite{BBS2006}).  It is usually difficult to determine the exact value of toughness of graphs (see \cite{BauerHakimiSchmeichel1990}). The toughness of some special families of graphs was determined (for example,  strongly regular graphs \cite{CioabaWong2014} and substantial ranges of Kneser graphs \cite{ParkEtAl2021}).  

In this paper, we study  the toughness of random graphs.  Let $G(n,p)$ be the binomial random graph on vertex set $[n]$, with every edge present independently with probability $p$.  An event holds \emph{with high probability}, abbreviated whp, if its probability tends to one as $n\to\infty$. Throughout, $p\in(0,1)$ is fixed, and set $q=1-p$ and
$b=\frac1q.$
The classical theory of random graphs gives
\begin{equation}\label{eq:standard-random-parameters}
 \alpha(G(n,p))=(2+o(1))\log_b n,
 \qquad
 \kappa(G(n,p))=(p+o(1))n
\end{equation}
with high probability.  The first estimate goes back to the classical clique-number results of Matula and of Bollob\'as and Erd\H{o}s (see \cite{Matula1970,BollobasErdos1976}).  The second one follows from the random-graph connectivity theory of Erd\H{o}s--R\'enyi and Bollob\'as--Thomason (see \cite{ErdosRenyi1961,BollobasThomason1985};  also \cite[Chapters 7 and 11]{Bollobas2001}).  Recently, the independence number of random graphs has received considerable attention (see \cite{BohmanHofstad2024}).

For a non-complete and connected graph $G$ of order $n$, the complement of a maximum independent set gives the elementary upper bound
\begin{equation}\label{eq:independence-upper-intro}
 \tau(G)\le \frac{n-\alpha(G)}{\alpha(G)}.
\end{equation}
It is therefore natural to ask whether this construction describes the toughness of a graph, either asymptotically or exactly.  Our first result  affirmatively answers the asymptotic question.

\begin{theorem}\label{thm:main-asymptotic}
Given $p\in(0,1)$, set $b=(1-p)^{-1}$ and $a=\alpha(G(n,p))$.  Then, with high probability,
\begin{equation}\label{eq:quantitative-envelope}
\tau(G(n,p))\in\left\{\frac{n-a}{a},\frac{n-a-1}{a}\right\}.
\notag
\end{equation}
Consequently,
\[
 \tau(G(n,p))
 =(1+o(1))\frac{n}{2\log_b n}
\] with high probability.
\end{theorem}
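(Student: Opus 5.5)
The upper bound $\tau\le (n-a)/a$ is \eqref{eq:independence-upper-intro}, since $G(n,p)$ is whp connected and non-complete. The whole task is therefore the lower bound: whp every $S$ with $c(G-S)=k\ge 2$ satisfies $|S|/k\ge (n-a-1)/a$. Write $G=G(n,p)$ and let $S$ be a cut with $k$ components. The plan is to split according to $k$.

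\emph{Small $k$.} Suppose $2\le k\le K$, where $K$ is a suitable threshold to be fixed below (something like $\varepsilon\log n$ or a large constant will do). Whp $\kappa(G)\ge (p-o(1))n$ by \eqref{eq:standard-random-parameters}. Hence
\[
|S|/k\ge (p-o(1))n/K,
\]
and this exceeds $n/a\approx n/(2\log_b n)$ as long as $K=o(\log n)$. To handle larger $k$ as well, I would use a stronger expansion fact: whp, for all disjoint vertex sets $A,B$ with $|A|,|B|\ge C\log n$, there is an edge between $A$ and $B$. Since the components of $G-S$ have no edges between them, this forces all but one component to be small. More precisely, the union of all components except the largest has fewer than $C\log n$ vertices, or else we can group the components into two sides each of size at least $C\log n$, which is a contradiction. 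So $|V\setminus S|\le L+C\log n$, where $L$ is the size of the largest component.

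\emph{Main case.} Choose one vertex from each component. This gives an independent set, so $k\le a$.

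If the largest component has size $L\ge 2$, then I would show $|S|\ge n-k-1$ is impossible. A component of size $\ge 2$ together with many singletons is still tightly constrained: the small components together have at most $C\log n$ vertices and are independent of the big one. I expect to need a direct counting of configurations. We need $|S|\ge \frac{k}{a}(n-a-1)$, i.e. $n-|V\setminus S|\ge \frac{k}{a}(n-a-1)$. Since $k\le a$, this holds whenever
\[
|V\setminus S|\le n-\tfrac{k}{a}(n-a-1),
\]
and this is automatic if $|V\setminus S|\le k+1+(a-k)\frac{n-a-1}{a}$.

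\begin{itemize}
\item When $k=a$, we need $|V\setminus S|\le a+1$, i.e. the components cover at most $a+1$ vertices.
\item When $k\le a-1$, each unit of deficit gains roughly $n/a\gg\log n$ in slack. Since $|V\setminus S|\le L+C\log n$, it suffices to bound $L$ relative to $k$.
\end{itemize}

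To bound $L$, consider the nonsingleton components. A component of size $s$ plus the other $k-1$ components contains an independent set of size $k$ that is completely non-adjacent to a set of $s-1$ further vertices. I would bound the expected number of such structures, namely an independent $(k-1)$-set $I$ together with a connected set $T$ of size $s$ with no edges from $T$ to $I$, by $\binom{n}{k-1}\binom{n}{s}q^{\binom{k-1}{2}+s(k-1)}$. For $k$ near $2\log_b n$ this is tiny unless $s$ is bounded; in general the exponent forces $s(k-1)\lesssim (k+s)\log_b n$. This yields $|V\setminus S|\le k+O(\log n)$ or similar, which suffices when $a-k\ge 1$ and $n/a\gg\log n$.

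\emph{Obstacle.} The main difficulty is the borderline $k=a$ (and $k=a-1$). There one must show that no independent set of size $a$ admits a component of size $\ge 3$ attached. Equivalently, whp there are no $a$ vertices forming $a-1$ isolated singletons plus an edge pair with another non-adjacent vertex pattern. This requires the sharp concentration of $\alpha$ (two-point concentration, from the Matula / Bollob\'as--Erd\H{o}s second moment) and a first-moment count. The count shows that the expected number of independent $(a-1)$-sets with two extra vertices both non-adjacent to them is $o(1)$, because the expected number of independent sets of size $a+1$ is $o(1)$ and each extra vertex costs a factor $nq^{a}\to 0$-ish.

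The slack of one vertex is exactly what permits a single component of size $2$, giving the value $(n-a-1)/a$. I expect the careful first-moment calculation near $k=a$, uniform over where $a$ falls in its concentration window, to be the hardest part. The asymptotic statement then follows from \eqref{eq:standard-random-parameters}.
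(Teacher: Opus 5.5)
Your skeleton matches the paper's: connectivity forces many components, one representative per component gives $k\le a$, each missing component buys slack of about $n/a$, and a first-moment count caps $|V\setminus S|$ at $a+1$ when $k=a$. However, the step bounding $|V\setminus S|$ for $k\le a-1$ fails in a middle range of $k$.

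Your count $\binom n{k-1}\binom ns q^{\binom{k-1}2+s(k-1)}$, used with $\binom ns\le n^s$, constrains $s$ only when $k-1>\log_b n$. For $k-1=\delta\log_b n$ with $\delta<1$, the conclusion $|V\setminus S|\le k+O(\log n)$ is actually false. An independent $(k-1)$-set $I$ typically has about $n^{1-\delta}$ common non-neighbours spanning a connected graph, so $S=N(I)$ leaves $k$ components on about $n^{1-\delta}$ vertices.

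Connectivity does not cover this range either. The bound $|S|/k\ge(p-o(1))n/k$ beats $(n-a)/a$ only for $k$ below about $pa\approx 2p\log_b n$, and you only use $K=o(\log n)$. So nothing in the proposal handles $K<k\lesssim\log_b n$, and even the best choice $K\approx pa$ leaves this window nonempty when $p\le 1/2$.

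The missing idea is that in this range you only need $|V\setminus S|=o(n)$, not small components, and this is exactly \cref{lem:sparse-interface}. Fix $U$ with $|U|=t\in[\theta\log_b n,C\log_b n]$. The number of outside vertices with at most one neighbour in $U$ is $\operatorname{Bin}(n-t,(1+pt/q)q^t)$, with mean $n^{1-\theta+o(1)}$. Hence it exceeds $n^{1-\gamma}$ with probability $\exp(-\Omega(n^{1-\gamma}\log n))$, which beats the $\exp(O(\log^2 n))$ choices of $U$. Apply this to the set of representatives: every other vertex of $V\setminus S$ has at most one neighbour in it. This gives $|V\setminus S|\le k+n^{1-\gamma}$ for all $pa/2<k\le a$ at once. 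Together with connectivity for $k\le pa/2$, this disposes of every $k\le a-1$, with no need for your expansion fact or component-size analysis. Your own count would also suffice if you used $\binom ns\le(en/s)^s$ at $s=n^{1-\gamma}$.

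The $k=a$ step is also not yet an argument. What must be excluded is an $(a+2)$-set inducing exactly $a$ components, with profile $3,1,\ldots,1$ or $2,2,1,\ldots,1$. Any $W$ with $c(G[W])=a$ and $|W|\ge a+2$ contains one: take an edge from each of two nontrivial components, or a connected triple from one, plus one vertex from each other component. You mention only a single component of size at least $3$.

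The count you state, independent $(a-1)$-sets with two further vertices having no edges to them, need not be $o(1)$. At a deterministic size $t$ its mean is of order $t^2\mu_{t+1}$, which at $t=r$ can be of order $r^2$. Moreover, when the two vertices are adjacent this is precisely the permitted configuration that yields $(n-a-1)/a$. So your count contradicts your own closing remark.

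Finally, because $a$ is random, the first moment must be taken at deterministic $t\in\{r-1,r,r+1\}$. The bound $\EE Z_t\le C_pt^4\binom n{t+2}q^{\binom{t+2}2}$ is $o(1)$ for $t=r,r+1$, but for $t=r-1$ only when $\mu_r$ is bounded. When $\mu_r\to\infty$ you need a separate second-moment argument that $X_r>0$ whp, so that $a\ne r-1$, and the two cases are glued by a subsequence argument. You flag this as the hard part but do not supply it.
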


One may ask whether the probability that
$\tau(G(n,p))=(n-\alpha(G(n,p)))/\alpha(G(n,p))$ is bounded below by a
positive constant for all sufficiently large $n$. Our next result answers
this question negatively by exhibiting a sequence along which this
probability tends to zero. (It makes no assertion that this probability
tends to zero along the full sequence of integers $n$.)

\begin{theorem}\label{thm:exact-instability}
Fix $p\in(0,1)$.  There is a sequence of integers $n_m\to\infty$ (for $m\to\infty$) such that, for $G\sim G(n_m,p)$, with high probability
\begin{enumerate}[label=\textup{(\roman*)}]
 \item $\alpha(G)=m-1$;
 \item $
 \tau(G)\le \frac{n_m-\alpha(G)-1}{\alpha(G)}.
$
\end{enumerate}
\end{theorem}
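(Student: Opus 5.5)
The construction is an independent set $I$ of size $a-1$ with $a=\alpha(G)$, together with one extra vertex $v\notin I$ that is adjacent to exactly one vertex $u\in I$. Put $S=V\setminus(I\cup\{v\})$. Then $G-S$ has vertex set $I\cup\{v\}$, and its components are the edge $uv$ plus the $a-2$ isolated vertices of $I\setminus\{u\}$. So $c(G-S)=a-1$ and $|S|=n-a$, which gives the ratio $(n-a)/(a-1)$. That is larger than $(n-a)/a$, so this set alone does not help. The better choice is to add vertices while keeping many components.

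What is needed is a set $S$ with $|S|/c(G-S)\le (n-a-1)/a$. The cleanest candidate is $I\cup\{v,w\}$ with $|I|=a-1$ independent, where $v$ and $w$ are adjacent only to each other and to nothing in $I$. But then $I\cup\{v\}$ would be independent of size $a$, which is still fine since $a=\alpha$. Take $S=V\setminus(I\cup\{v,w\})$. Then $|S|=n-a-1$, and $G-S$ consists of $a-1$ isolated vertices plus the component $\{v,w\}$, so $c(G-S)=a$. The ratio is $(n-a-1)/a$, exactly what (ii) asks for.

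So the plan is to show that, for suitable $n_m$, whp:
\begin{enumerate}
\item $\alpha(G)=m-1$;
\item there exist an independent $(m-2)$-set $I$ and an edge $vw$ with no edges between $\{v,w\}$ and $I$.
\end{enumerate}
Such a configuration is an independent $m$-set with exactly one edge. It must not contain an independent $m$-set itself, and it is consistent with $\alpha=m-1$ because the extended set has an internal edge.

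The choice of $n_m$ goes through expected counts, using $b=1/q$. Let $X_k$ be the number of independent $k$-sets, so $\mathbb E X_k=\binom nk q^{\binom k2}$. Let $Y$ be the number of $m$-sets spanning exactly one edge, so $\mathbb E Y=\binom nm\binom m2 p\,q^{\binom m2-1}\asymp m^2\,\mathbb E X_m$. Choose $n_m$ so that $\mathbb E X_m\to0$ while $m^2\,\mathbb E X_m\to\infty$; for instance take $\mathbb E X_m\approx 1/m$. This is possible because $\mathbb E X_m$, as a function of $n$, changes by a factor of only $(1+O(m/n))^m=1+o(1)$ when $n$ increases by one. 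So $n$ can be tuned so that $\mathbb E X_m$ passes continuously through $1/m$. In this regime $\mathbb E X_{m-1}\approx \mathbb E X_m\cdot m\,b^{m-1}/n\to\infty$ as well, since $b^{m}\approx n^2$. Then:
\begin{enumerate}
\item Markov's inequality gives $\alpha<m$ whp.
\item The standard second-moment (Janson/Chebyshev) argument for independent sets near the critical size gives $X_{m-1}>0$, hence $\alpha=m-1$. This part is assumed from the classical theory.
\item A second-moment argument for $Y$ gives $Y>0$ whp.
\end{enumerate}

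The main obstacle is the second moment for $Y$. One must bound $\sum$ over pairs of near-independent $m$-sets sharing $\ell$ vertices, mimicking the classical computation for $X_m$. The overlap terms with $2\le \ell\le m-1$ behave like those for $X_m$, up to polynomial factors in $m$ coming from the edge position; these factors are killed by the $n^{-\Theta(\ell)}$ gains for small $\ell$ and the $b^{-\Theta(\ell(m-\ell))}$ terms elsewhere. The delicate range is $\ell$ close to $m$, where $\mathbb E Y$ being only polylogarithmically large matters. Here I would check directly that the ratio of these terms to $(\mathbb E Y)^2$ is $O\bigl(n\,m^{O(1)}b^{-m}/\mathbb E Y\bigr)=o(1)$.

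Finally, the configuration yields the required $S$. Also $G$ is whp connected and non-complete, so $\tau$ is defined and $\tau(G)\le |S|/c(G-S)=(n_m-\alpha-1)/\alpha$.
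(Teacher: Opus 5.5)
Your proposal is correct and follows essentially the same route as the paper. Both arguments choose $n_m$ with $\mu_m(n_m)\sim 1/m$, use Markov to get $\alpha\le m-1$, run a second-moment argument on the number of $m$-sets spanning exactly one edge, and take $S$ to be the complement of such a set, which gives $|S|/c(G-S)=(n_m-m)/(m-1)$.

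The differences are minor. Your separate second-moment argument for $X_{m-1}$ is unnecessary, because $I\cup\{v\}$ is already an independent $(m-1)$-set. Also, the paper's pairwise bound $\PP(I_A=I_B=1)\le(1+q/p)\,b^{\binom{j}{2}}(\EE I_A)^2$ shows that the one-edge overlap terms exceed the independent-set ones by at most a constant factor, not a polynomial in $m$. The whole overlap sum is then handled by the same overlap-sum lemma used for independent sets.
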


The rest of this paper is organized as follows. In Section 2, we give some elementary estimates. 
 In Section 3, we will prove a useful lemma for \cref{thm:main-asymptotic}. The proof of \cref{thm:main-asymptotic} will be given in Section 4, and the proof of \cref{thm:exact-instability} will be given in Section 5.

\section{Some elementary estimates}

  We shall repeatedly use the following elementary estimates.  

\begin{lemma}\label[lemma]{lem:elementary-estimates}
Let $1\leq r \leq N$ and $X\sim\operatorname{Bin}(N,s)$. Then the following conclusions hold.
\begin{enumerate}[label=\textup{(\roman*)}]
 \item 
 \begin{equation}\label{eq:binomial-coefficient-upper}
  \binom Nr\le\left(\frac{eN}{r}\right)^r.
 \end{equation}
 \item For any integer
 $z\ge\max\{1,Ns\}$,
 \begin{equation}\label{eq:binomial-tail-self-contained}
  \PP(X\ge z)\le\left(\frac{eNs}{z}\right)^z.
 \end{equation}
 \item If $r/N\to0$ (for $N\to\infty$), then
 \begin{equation}\label{eq:uniform-stirling-binomial}
  \log\binom Nr
  =r\log\frac Nr+r
   +O\!\left(\frac{r^2}{N}+\log(r+1)\right).
 \end{equation}
\end{enumerate}
\end{lemma}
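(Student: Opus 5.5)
The three parts are independent, and each follows from a standard inequality. Part (ii) is a Chernoff-type bound, and part (iii) is Stirling's formula together with a Taylor expansion.

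For (i), I would write $\binom Nr\le N^r/r!$ and use $r!\ge (r/e)^r$. The latter follows from $e^r=\sum_k r^k/k!\ge r^r/r!$.

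For (ii), I would use the exponential moment method. For $\lambda>0$, Markov's inequality gives
\[
\PP(X\ge z)\le e^{-\lambda z}\EE e^{\lambda X}=e^{-\lambda z}(1+s(e^\lambda-1))^N\le \exp\bigl(Ns(e^\lambda-1)-\lambda z\bigr),
\]
using $1+x\le e^x$. Choosing $e^\lambda=z/(Ns)$, which is at least $1$ since $z\ge Ns$, gives $\exp(z-Ns-z\log\frac{z}{Ns})\le (eNs/z)^z$. If $s=0$ the bound is trivial. The hypothesis $z\ge1$ only ensures the right-hand side makes sense.

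For (iii), I would write $\log\binom Nr=\sum_{i=0}^{r-1}\log(N-i)-\log r!$. The first sum equals
\[
r\log N+\sum_{i<r}\log(1-i/N).
\]
Since $r/N\to0$, each term satisfies $\log(1-i/N)=O(i/N)$ uniformly, so the sum is $O(r^2/N)$. By Stirling's formula, valid for all $r\ge1$ in the form $\log r!=r\log r-r+O(\log(r+1))$, the result is $r\log N-r\log r+r+O(r^2/N+\log(r+1))$, which is the claim. The one point needing care is uniformity: the Taylor bound $|\log(1-x)|\le 2x$ holds for $x\le 1/2$, and this is guaranteed for large $N$ because $i/N<r/N\to0$. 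I would also use the explicit Stirling bounds $r\log r-r\le\log r!\le r\log r-r+\log r+1$ to make the constant uniform in $r$. I expect no real obstacle.
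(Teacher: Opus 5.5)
Your proof is correct. Parts (i) and (iii) follow the paper's argument; the differences are cosmetic. The paper gets $r!\ge(r/e)^r$ from $\log r!\ge\int_1^r\log x\,dx$ rather than from $e^r\ge r^r/r!$. In (iii) it expands $\log(1-x)=-x+O(x^2)$ on $[0,1/2]$ where you use $|\log(1-x)|\le 2x$. The decomposition $\log\binom Nr=r\log N-\log r!+\sum_{i<r}\log(1-i/N)$ and the integral-comparison bounds for $\log r!$ are the same as yours.

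The genuine difference is in (ii). The paper uses no exponential moments; it applies Markov's inequality to the factorial moment, $\PP(X\ge z)=\PP\bigl(\binom Xz\ge1\bigr)\le\EE\binom Xz=\binom Nz s^z$, and then invokes (i). This is a union bound over $z$-sets of successful trials and reuses part (i). Its intermediate estimate $\binom Nz s^z$ holds for every integer $1\le z\le N$ with no condition $z\ge Ns$, but it does rely on $z$ being an integer.

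Your Chernoff argument is independent of (i), works for real $z>0$, and gives the slightly stronger bound $e^{-Ns}(eNs/z)^z$. The cost is that the optimizing choice $\lambda=\log\frac{z}{Ns}$ needs $s>0$ and $z\ge Ns$. It also allows $\lambda=0$ when $z=Ns$, so the restriction $\lambda>0$ you state should read $\lambda\ge0$; the bound is trivial in that case.

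Part (ii) is used only for the bound on $|B(U)|$ in Section 3, where $z=\lceil n^{1-\gamma}\rceil$ is an integer exceeding the mean. Either version suffices there.
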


\begin{proof}
For (i), using
\[
 \log(r!)=\sum_{i=1}^r\log i
 \ge\int_1^r\log x\,dx
 =r\log r-r+1,
\]
we obtain $r!\ge e(r/e)^r\ge(r/e)^r$.  Since
\[
 \binom Nr
 =\frac{N(N-1)\cdots(N-r+1)}{r!}
 \le\frac{N^r}{r!},
\]
we see
\[
 \binom Nr\le\frac{N^r}{(r/e)^r}
 =\left(\frac{eN}{r}\right)^r,
\]
establishing \eqref{eq:binomial-coefficient-upper}.

For (ii), if $z>N$, the event $X\ge z$ is impossible and the inequality
is immediate. We may therefore assume $1\le z\le N$.
Set $Y=\binom Xz$ as $X$ is integer-valued. Clearly, the event $X\ge z$ is equivalent to the event $Y\ge1$.  Using Markov's
inequality we obtain
\begin{equation}\label{eq:factorial-moment-tail}
 \PP(X\ge z)=\PP(Y\ge 1)\le\EE Y
 =\EE\binom Xz.
\end{equation}
Denote $X=\sum_{i=1}^N\xi_i$, where the $\xi_i$ are independent Bernoulli
variables with success probability $s$.  Since the random variable $\binom Xz$
counts the $z$-subsets of successful trials, we have
\[
 \binom Xz
 =\sum_{J\in\binom{[N]}z}\prod_{i\in J}\xi_i.
\]
Then
\[
 \EE\binom Xz
 =\sum_{J\in\binom{[N]}z}s^z
 =\binom Nzs^z.
\]
Using \eqref{eq:factorial-moment-tail} and (i) with $r=z$, we obtain 
\[
 \PP(X\ge z)
 \le\binom Nzs^z
 \le\left(\frac{eN}{z}\right)^zs^z
 =\left(\frac{eNs}{z}\right)^z,
\]
establishing (ii).  (In fact, the calculation before the final numerical
comparison is valid for every integer $1\le z\le N$; the stated assumption
$z\ge Ns$ is the range in which the displayed bound is useful.)

For (iii), we can assume $r/N\le1/2$ as $r/N\to0$.  It is easy to check that
\begin{equation}\label{eq:binomial-product-expansion}
 \log\binom Nr
 =r\log N-\log(r!)
  +\sum_{i=0}^{r-1}\log\left(1-\frac{i}{N}\right).
\end{equation}
Since $\log x$ is increasing for $x>0$, we have
\[
 \int_1^r\log x\,dx
 \le\sum_{i=2}^r\log i
 \le\int_1^r\log x\,dx+\log r.
\]
It is easy to see 
$$\int_1^r\log x\,dx=r\log r-r+1.$$
Thus, the two-sided estimate above is
\[
 \log(r!)=r\log r-r+O(\log(r+1)),
\]
where $\log(r+1)$ also absorbs the constant term when $r=1$.
For $0\le x\le1/2$, using Taylor's formula we have 
$\log(1-x)=-x+O(x^2)$.  
Hence
\begin{align*}
 \sum_{i=0}^{r-1}\log\left(1-\frac{i}{N}\right)
 &=-\frac1N\sum_{i=0}^{r-1}i
   +O\!\left(\frac1{N^2}\sum_{i=0}^{r-1}i^2\right)\\
 &=-\frac{r(r-1)}{2N}+O\!\left(\frac{r^3}{N^2}\right)
 =O\!\left(\frac{r^2}{N}\right),
\end{align*}
where the last equality uses $r/N\le1/2$.  Inserting the last two estimates
into \eqref{eq:binomial-product-expansion} gives
\[
 \log\binom Nr
 =r\log N-r\log r+r
  +O\!\left(\frac{r^2}{N}+\log(r+1)\right),
\]
establishing (iii).
\end{proof}

For $x>0$, let $\log^+x:=\max\{\log x,0\}$. For an integer
$t\geq0$, set
\[
 H_j^{(t)}=
 \frac{\binom tj\binom{n-t}{t-j}}{\binom nt},\qquad
 T_j^{(t)}=H_j^{(t)}b^{\binom j2}.
\]

\begin{lemma}\label[lemma]{lem:overlap-sum}
Let $t$ be an integer satisfying that
$t=(2+o(1))\log_b n $
and
 $\log^+(\mu_t^{-1})=o(\log n),
$ where $\mu_t=\binom ntq^{\binom t2}$.
Then
\[
 \sum_{j=2}^{t-1}T_j^{(t)}=o(1).
\]
\end{lemma}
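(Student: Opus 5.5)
The plan is to split the range $2\le j\le t-1$ at $j\approx \tfrac23 t$ and bound the terms differently in the two regimes. For $j$ small relative to $t$, the factor $b^{\binom j2}$ is dominated by the hypergeometric decay $H_j^{(t)}\approx (t^2/n)^j$. For $j$ close to $t$, I will rewrite $T_j^{(t)}$ in terms of $\mu_t^{-1}$ and $k=t-j$, and use that $\mu_t^{-1}=n^{o(1)}$ by the hypothesis $\log^+(\mu_t^{-1})=o(\log n)$. Throughout I fix $\varepsilon=1/3$ and write $t=(2+o(1))\log_b n$, so that $b^{t}=n^{2+o(1)}$ and $t=n^{o(1)}$.

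\emph{Regime 1: $2\le j\le (1-\varepsilon)t$.}
\begin{itemize}
\item First, $\binom{n-t}{t-j}\le\binom{n}{t-j}$ and
\[
\binom{n}{t-j}\Big/\binom nt=\prod_{i=0}^{j-1}\frac{t-i}{n-t+j-i}\le\Bigl(\frac{t}{n-t}\Bigr)^j .
\]
\item Second, \eqref{eq:binomial-coefficient-upper} gives $\binom tj\le (et/j)^j$.
\item Combining these,
\[
T_j^{(t)}\le\Bigl(\frac{e t^2}{j(n-t)}\,b^{(j-1)/2}\Bigr)^j .
\]
\item Since $j\le (1-\varepsilon)t$, we have $b^{(j-1)/2}\le b^{(1-\varepsilon)t/2}=n^{1-\varepsilon+o(1)}$. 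The base is therefore at most $n^{-\varepsilon+o(1)}\le n^{-\varepsilon/2}$ for large $n$.
\item Hence the sum over this range is at most $\sum_{j\ge2}n^{-\varepsilon j/2}=O(n^{-\varepsilon})=o(1)$.
\end{itemize}

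\emph{Regime 2: $(1-\varepsilon)t< j\le t-1$.} Put $k=t-j$, so $1\le k<\varepsilon t$.
\begin{itemize}
\item Using $\binom nt=\mu_t\, b^{\binom t2}$, we get
\[
T_j^{(t)}=\mu_t^{-1}\binom tk\binom{n-t}{k}\,q^{\binom t2-\binom{t-k}2},
\qquad \binom t2-\binom{t-k}2=\frac{k(2t-k-1)}2 .
\]
\item Bounding $\binom tk\le t^k$ and $\binom{n-t}k\le n^k$ gives
\[
T_j^{(t)}\le \mu_t^{-1}\bigl(tn\,q^{(2t-k-1)/2}\bigr)^k .
\]
\item Because $k<\varepsilon t$, we have $q^{(2t-k-1)/2}\le q^{(1-\varepsilon/2)t}\,b^{1/2}=n^{-2+\varepsilon+o(1)}$. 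The base is thus $n^{-1+\varepsilon+o(1)}\le n^{-1/2}$.
\item The hypothesis gives $\mu_t^{-1}\le \max\{1,\mu_t^{-1}\}=n^{o(1)}$. So the contribution of this regime is at most $n^{o(1)}\sum_{k\ge1}n^{-k/2}=n^{-1/2+o(1)}=o(1)$.
\end{itemize}
The two regimes cover $2\le j\le t-1$, which gives the claim.

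The main obstacle is the upper range $j$ close to $t$. There the naive bound from Regime 1 fails, since $b^{(j-1)/2}$ is nearly $n$. The reformulation through $\mu_t$ is exactly where the hypothesis $\log^+(\mu_t^{-1})=o(\log n)$ enters: without it, a tiny $\mu_t$ would make the $k=1$ term large. The remaining care is bookkeeping of the $o(1)$ terms in exponents, which is uniform in $j$ because they come only from $t=(2+o(1))\log_b n$.
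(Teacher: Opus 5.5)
Your proposal is correct and follows essentially the paper's proof. You bound small and moderate overlaps by $\bigl(e t^2 b^{(j-1)/2}/(j(n-t))\bigr)^j$, and overlaps near $t$ via $T_{t-k}^{(t)}=\mu_t^{-1}\binom tk\binom{n-t}k q^{k(2t-k-1)/2}$ together with $\mu_t^{-1}\le n^{o(1)}$. The only difference is that you split once at $j=2t/3$, where the paper uses the cut points $t/4$ and $t/2$, which merges the paper's two lower ranges into one.
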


\begin{proof} Fix a $t$-set $A$  and choose a uniformly random $t$-set $B$.  Then
$H_j^{(t)}=\PP(|A\cap B|=j)$.  If $J\subseteq A$ has order $j$, then
\[
 \PP(J\subseteq B)=\frac{\binom{n-j}{t-j}}{\binom{n}{t}}
 \le\left(\frac{t}{n-t}\right)^j.
\]
Note that $n-t\ge n/2$ for all sufficiently
large $n$, since $t=(2+o(1))\log_b n=o(n)$.   Hence, for $2\le j\le t-1$,
\begin{equation}\label{eq:small-overlap-hypergeometric}
 H_j^{(t)}
 \le \PP(|A\cap B|\ge j)
 \le \binom tj\left(\frac{t}{n-t}\right)^j
 \le\left(\frac{2et^2}{jn}\right)^j.
\end{equation}
Consequently, for $2\le j\le t/2$,
\[
 T_j^{(t)}\le
 \left(\frac{Ct^2b^{(j-1)/2}}{jn}\right)^j.
\]
By  the assumption on $t$, we see 
\begin{equation}\label{eq:bt-overlap-scale}
 b^t=n^{2+o(1)},\qquad t=n^{o(1)}.
\end{equation}
If $2\le j\le t/4$, then
$b^{(j-1)/2}\le b^{t/8}=n^{1/4+o(1)}$. Thus, $$T_j^{(t)}\leq(n^{-3/4+o(1)})^{j}\leq(n^{-3/4+o(1)})^{2}.$$
  Therefore
\begin{equation}\label{eq:small-overlap-sum}
 \sum_{j=2}^{\lfloor t/4\rfloor}T_j^{(t)}
 \le t\bigl(n^{-3/4+o(1)}\bigr)^2=o(1).
\end{equation}
If $t/4<j\le t/2$, then
$b^{(j-1)/2}\le b^{t/4}=n^{1/2+o(1)}$ and $t^2/j=O(t)=n^{o(1)}$.
Thus, $$T_j^{(t)}\leq(n^{-1/2+o(1)})^{j}\leq(n^{-1/2+o(1)})^{t/4}.$$
 Since $j>t/4$ and $t=\Theta(\log n)$,
\begin{equation}\label{eq:middle-overlap-sum}
 \sum_{t/4<j\le t/2}T_j^{(t)}
 \le t\bigl(n^{-1/2+o(1)}\bigr)^{t/4}=o(1).
\end{equation}

It remains to consider $t/2<j<t$.  Set $j=t-s$, where
$1\le s<t/2$.  From the definitions of $H_j^{(t)}$, $T_j^{(t)}$, and
$\mu_t=\binom ntb^{-\binom t2}$, we see
\[
 T_{t-s}^{(t)}
 =\mu_t^{-1}\binom ts\binom{n-t}s
 b^{-s(2t-s-1)/2}
 .
\]
Using $\binom ts\le(et/s)^s$ and
$\binom{n-t}s\le(en/s)^s$, we obtain
\begin{align*}
 T_{t-s}^{(t)}
 &\le \mu_t^{-1}
 \left(\frac{e^2tn}{s^2}
 b^{-t+(s+1)/2}\right)^s.
\end{align*}
Since $s<t/2$, we have
$-t+(s+1)/2\le-3t/4+1/2$.  By
\eqref{eq:bt-overlap-scale},
\[
 n b^{-t+(s+1)/2}
 \le n^{-1/2+o(1)}.
\]
The assumption $\log^+(\mu_t^{-1})=o(\log n)$ is equivalent to
$\mu_t^{-1}\le n^{\varepsilon_n}$ for some 
$\varepsilon_n\to0$.  Consequently, uniformly for $1\le s<t/2$,
\begin{equation}\label{eq:large-overlap-term}
 T_{t-s}^{(t)}
 \le n^{\varepsilon_n}
 \left(\frac{C_bt}{s^2}n^{-1/2+o(1)}\right)^s.
\end{equation}
Noting $t=O(\log n)$, the expression in parentheses is at most
$n^{-1/3}$ for all sufficiently large $n$.
By taking $\varepsilon_n<1/6$, from
\eqref{eq:large-overlap-term} we obtain
\begin{equation}\label{eq:large-overlap-sum}
 \sum_{1\le s<t/2}T_{t-s}^{(t)}
 \le n^{\varepsilon_n}\sum_{s\ge1}n^{-s/3}
 =o(1).
\end{equation}
Now the lemma follows from \eqref{eq:small-overlap-sum},
\eqref{eq:middle-overlap-sum}, and \eqref{eq:large-overlap-sum}.
\end{proof}

\begin{lemma}\label[lemma]{lem:connected-noncomplete}
Given $p\in(0,1)$, then $G(n,p)$ is connected and non-complete with
probability $1-o(1)$.
\end{lemma}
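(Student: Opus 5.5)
We prove the two properties separately and then combine them with a union bound, since the complement of the intersection of the two events is contained in the union of their complements.

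\emph{Non-completeness.} The graph $G(n,p)$ is complete exactly when all $\binom n2$ edges are present. This happens with probability $p^{\binom n2}$, which tends to $0$ because $p<1$ is fixed.

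\emph{Connectivity.} We avoid appealing to the connectivity theory quoted in the introduction and use a direct first-moment argument. If $G=G(n,p)$ is disconnected, then $G$ has a component $C$ with $1\le |C|\le n/2$, and there are no edges between $C$ and $V\setminus C$. So it suffices to show that, with high probability, for every set $U$ with $1\le k=|U|\le \lfloor n/2\rfloor$ there is at least one edge between $U$ and its complement. For a fixed $U$ of size $k$ there are $k(n-k)$ potential crossing edges, and all are absent with probability $q^{k(n-k)}$. Since $n-k\ge n/2$, this is at most $q^{kn/2}$. By the union bound and \cref{lem:elementary-estimates}(i), $\binom nk\le n^k$, so
\[
 \PP(G\text{ disconnected})\le\sum_{k=1}^{\lfloor n/2\rfloor}\binom nk q^{k(n-k)}
 \le\sum_{k\ge1}\bigl(n q^{n/2}\bigr)^k .
\]
Because $q\in(0,1)$ is fixed, $nq^{n/2}\to0$. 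The geometric series is therefore $O(nq^{n/2})=o(1)$.

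Combining the two estimates, $\PP(G(n,p)\text{ is disconnected or complete})\le o(1)+p^{\binom n2}=o(1)$, which proves the lemma. The main step is the cut-counting bound, where the crude estimate $\binom nk\le n^k$ suffices because the exponent $k(n-k)\ge kn/2$ is linear in $n$ for each $k$.
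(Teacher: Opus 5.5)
Your proof is correct and follows essentially the same route as the paper: a union bound over vertex sets $U$ with $1\le |U|\le n/2$ having no edges to $[n]\setminus U$, using $n-k\ge n/2$ to reduce to a geometric series, plus $p^{\binom n2}\to0$ for non-completeness. The only difference is cosmetic. You use $\binom nk\le n^k$ where the paper uses $(en/k)^k$; note that your bound follows from $\binom nk\le n^k/k!$ rather than literally from part (i) of the elementary-estimates lemma, since $(en/k)^k>n^k$ for $k\le 2$.
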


\begin{proof}
Set $q=1-p$ and $c=-\log q>0$.  If a graph is disconnected, it has a
 component with vertex set $U$ satisfying
$1\le|U|\le n/2$.  The event that 
the $k(n-k)$ edges between $U$ and its complement are absent  has probability
$q^{k(n-k)}$.  Thus, using $n-k\ge n/2$ and
\cref{lem:elementary-estimates}(i),
\begin{align*}
 \PP(G(n,p)\text{ is disconnected})
 &\le\sum_{k=1}^{\lfloor n/2\rfloor}
   \binom nkq^{k(n-k)}\\
 &\le\sum_{k=1}^{\lfloor n/2\rfloor}
   \left(\frac{en}{k}\right)^k e^{-ckn/2}.
\end{align*}
Clearly,
$en e^{-cn/2}\le e^{-cn/3}$ for all sufficiently large $n$.  Hence
\[
 \PP(G(n,p)\text{ is disconnected})
 \le\sum_{k\ge1}e^{-ckn/3}
 =\frac{e^{-cn/3}}{1-e^{-cn/3}}=o(1).
\]
Noting $\log p\le-(1-p)$, the probability that $G(n,p)$ is complete equals
\[
 p^{\binom n2}\le e^{-(1-p)\binom n2}=o(1).
\]
Consequently, $G(n,p)$ is connected and non-complete with probability
$1-o(1)$.
\end{proof}

\section{A useful lemma}

Let $G$ be a graph with vertex set $[n]$. For $U\subseteq[n]$, define
\[
 B(U)=\{x\in[n]\setminus U:|N(x)\cap U|\le1\},
\]
where $N(x)$ denotes the set of neighbors of $x$ in $G$.
The following lemma is used in the proof of \cref{thm:main-asymptotic}.

\begin{lemma}\label[lemma]{lem:sparse-interface}
Fix $p\in(0,1)$ and constants $0<\theta<C$.  Let $b=(1-p)^{-1}$.  There is a constant $\gamma=\gamma(p,\theta,C)>0$ such that, whp $G(n,p)$ has the property:
\[
 |B(U)|\le n^{1-\gamma}=o(n),
\]
for any $U\subseteq[n]$ with $
 \theta\log_b n\le |U|\le C\log_b n$.
\end{lemma}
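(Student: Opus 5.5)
We will use a first-moment union bound over all pairs $(U,W)$, where $W$ is a large set of vertices outside $U$ that each have at most one neighbour in $U$. Set $u=|U|$ and $k=\lceil n^{1-\gamma}\rceil$, with $\gamma>0$ to be chosen. If $|B(U)|>n^{1-\gamma}$, then some $k$-subset $W\subseteq B(U)$ exists. For fixed disjoint $U$ and $W$, the events $\{|N(x)\cap U|\le1\}$, $x\in W$, are independent, because they involve disjoint sets of edges (those between $x$ and $U$). Each has probability
\[
 \pi_u:=q^u+upq^{u-1}\le (u+1)q^{u-1}.
\]
Hence
\[
 \PP(\exists\,U,\ |B(U)|>n^{1-\gamma})\le\sum_{u=\lceil\theta\log_b n\rceil}^{\lfloor C\log_b n\rfloor}\binom nu\binom nk\pi_u^{k}.
\]

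The key computation compares the three factors for $\theta\log_b n\le u\le C\log_b n$. First, $\pi_u\le (C\log_b n+1)\,b\,n^{-\theta}=n^{-\theta+o(1)}$. Second, by \cref{lem:elementary-estimates}(i),
\[
 \binom nk\le (en/k)^k=(e n^{\gamma})^{k}=n^{(\gamma+o(1))k}.
\]
Third, $\binom nu\le n^{u}=n^{O(\log n)}=e^{O(\log^2 n)}$. The term for each $u$ is therefore at most
\[
 \exp\bigl(O(\log^2 n)+k(\gamma-\theta+o(1))\log n\bigr).
\]
Choosing $\gamma=\theta/2$ makes the exponent at most $O(\log^2n)-\tfrac{\theta}{3}k\log n$. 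Since $k\ge n^{1-\theta/2}$ dominates $\log n$, this is $-\Omega(n^{1-\theta/2}\log n)$. Summing over the $O(\log n)$ values of $u$ still gives $o(1)$.

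This is a clean first-moment argument. The only delicate point is that the independence claim needs $W\cap U=\emptyset$, which holds by the definition of $B(U)$. We must also ensure $\gamma<1$ so that $k\le n-u$ and $n^{1-\gamma}=o(n)$; if $\theta\ge 2$, we simply take $\gamma=\min\{\theta/2,1/2\}$, and the same bound works. I expect no real obstacle beyond checking that the $\binom nu$ entropy ($e^{O(\log^2 n)}$) is negligible compared with the gain $n^{-\Theta(k)}$. That comparison is where having $k$ polynomially large, rather than logarithmic, is essential.
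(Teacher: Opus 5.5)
Your proposal is correct and is essentially the paper's argument. The paper fixes $U$, notes $|B(U)|\sim\operatorname{Bin}(n-t,\pi_t)$, and applies the tail bound of \cref{lem:elementary-estimates}(ii), whose proof is exactly your union bound $\binom{N}{z}s^z$ over $z$-subsets $W\subseteq B(U)$; it then takes a union bound over the $e^{O(\log^2 n)}$ sets $U$, with only a cosmetic difference in the choice of $\gamma$ (yours $\min\{\theta/2,1/2\}$, the paper's any $\gamma<\min\{\theta/3,1/3\}$).
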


\begin{proof}
Set $q=1-p$ and  $t=|U|$.  For
$x\in[n]\setminus U$, the random variable $|N(x)\cap U|$ has distribution
$\operatorname{Bin}(t,p)$.  Therefore
\[
 \pi_t:=\PP(x\in B(U))
 =q^t+tpq^{t-1}
 =\left(1+\frac{pt}{q}\right)q^t.
\]
For each $x\notin U$, the event $x\in B(U)$ is determined solely by the
edges $\{\{x,u\}:u\in U\}$. These edge sets are pairwise disjoint as $x$
varies over $[n]\setminus U$. Since all edge indicators in $G(n,p)$ are
mutually independent, the events $\{x\in B(U)\}$ are mutually independent.
Thus,
\[
 |B(U)|\sim\operatorname{Bin}(n-t,\pi_t).
\]
 Note that
$q^t=b^{-t}\le n^{-\theta}$ as $t\ge\theta\log_b n$.  Noting that $t\le C\log_b n$, there is a
constant $C_1=C_1(p,C)>0$ such that
$1+pt/q\le C_1\log n$ for all sufficiently large $n$.  
Thus,
\begin{equation}\label{eq:BU-mean}
 (n-t)\pi_t\le C_1n^{1-\theta}\log n
 =n^{1-\theta+o(1)}.
\end{equation}

Choose $0<\gamma<\min\{\theta/3,1/3\}$ and set
$z=\lceil n^{1-\gamma}\rceil$.  Then $z>(n-t)\pi_t$, and 
$z<n-t$ as $t=O(\log n)$. 
Noting $\theta-\gamma>0$,
\eqref{eq:BU-mean} implies
\[
 \frac{e(n-t)\pi_t}{z}
 \le eC_1n^{-(\theta-\gamma)}\log n
 \le n^{-(\theta-\gamma)/2}
\]
for all sufficiently large $n$.    We can therefore apply
\cref{lem:elementary-estimates}(ii) to
$|B(U)|\sim\operatorname{Bin}(n-t,\pi_t)$.  It gives
\[
 \PP(|B(U)|\ge z)
 \le\left(\frac{e(n-t)\pi_t}{z}\right)^z
\le \exp\left\{-\frac{\theta-\gamma}{2}z\log n\right\}
 \le \exp\bigl(-c_2n^{1-\gamma}\log n\bigr)
\]
for a constant $c_2=c_2(\theta,\gamma)>0$.

The number of such sets $U$ is at most
\[
 \sum_{t\le C\log_b n}\binom nt
 \le(C\log_b n+1)n^{C\log_b n}
 =\exp\bigl(O((\log n)^2)\bigr).
\]
Since $n^{1-\gamma}\log n\gg(\log n)^2$, all such sets $U$ satisfies $|B(U)|<z$ with probability $1-o(1)$.
This completes the proof.
\end{proof}

\section{Proof of \texorpdfstring{\cref{thm:main-asymptotic}}{Theorem 1.1}}

In this section, we give the proof \cref{thm:main-asymptotic}.
Recall that $q=1-p$ and $b=q^{-1}$.
Define
\begin{equation}\label{eq:mu-lambda}
 \mu_s=\binom ns q^{\binom s2},
\end{equation}
and 
\begin{equation}\label{eq:canonical-r}
 r=r(n)=\max\{1\le s\le n:\mu_s\ge1\}.
\end{equation}

\begin{lemma}\label[lemma]{lem:canonical-scale}
Let $r=r(n)$ be defined in \eqref{eq:canonical-r}. Then
\begin{equation}\label{eq:scale-bounds}
 \frac{r-1}{2}\log b-1
 \le \log\frac nr
 \le \frac r2\log b+o(1).
\end{equation}
Consequently,
\begin{equation}\label{eq:br-scale}
 b^r=\Theta\!\left(\frac{n^2}{r^2}\right),
 \end{equation}
and
\begin{equation}\label{eq:ratio-nu-mu}
r=2\log_b n-2\log_b\log_b n+O(1).
\end{equation}
\end{lemma}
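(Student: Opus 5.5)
The plan is to use the two defining inequalities of $r$: $\mu_r\ge1$ and $\mu_{r+1}<1$. (For large $n$ we have $\mu_2=\binom n2q\ge1$, and $\mu_s<1$ for $s$ close to $n$. So $r$ is well defined, lies in $[2,n)$, and $\mu_{r+1}<1$.) First I will obtain a crude bound $r=O(\log n)$. From $\binom ns\le(en/s)^s$ in \cref{lem:elementary-estimates}(i) we get
\[
\mu_s\le\left(\frac{en}{s}b^{-(s-1)/2}\right)^s,
\]
and this is $<1$ once $s\ge 3\log_b n$. Hence $r=O(\log n)$, so $r/n\to0$, and \cref{lem:elementary-estimates}(iii) is available with error $O(r^2/n+\log r)=O(\log\log n)$.

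For the lower bound in \eqref{eq:scale-bounds}, I will use $\mu_r\ge1$ together with (i):
\[
1\le\left(\frac{en}{r}b^{-(r-1)/2}\right)^r.
\]
This gives $\log(n/r)+1-\frac{r-1}{2}\log b\ge0$, which is exactly the left inequality.

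For the upper bound I will use $\mu_{r+1}<1$. By (iii) with $s=r+1$,
\[
(r+1)\Bigl(\log\frac n{r+1}+1-\frac r2\log b\Bigr)<O(\log\log n).
\]
Dividing by $r+1$, and using $\log\frac{n}{r+1}=\log\frac nr+O(1/r)$, this yields $\log(n/r)\le\frac r2\log b-1+O(\log\log n/\log n)$. That is stronger than the required bound $\frac r2\log b+o(1)$. One point needs checking here: the division requires $r\to\infty$. This follows from the lower bound already proved, because if $r$ were bounded, the left inequality would force $\log(n/r)=O(1)$, which is false.

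Next, \eqref{eq:br-scale} follows by exponentiating. Rewrite \eqref{eq:scale-bounds} as
\[
\frac r2\log b-O(1)\le\log\frac nr\le\frac r2\log b+\frac12\log b+1.
\]
Doubling gives $r\log b=2\log(n/r)+O(1)$, so $b^r=\Theta(n^2/r^2)$. For \eqref{eq:ratio-nu-mu}, I will take $\log_b$ of this relation to get $r=2\log_b n-2\log_b r+O(1)$. Bootstrapping from $r=O(\log n)$ and $r\to\infty$ first gives $r=(2+o(1))\log_b n$. Then $\log_b r=\log_b\log_b n+O(1)$, and substituting back gives the claimed expansion.

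The main obstacle is the upper bound. There the crude inequality (i) goes the wrong way, so the sharper Stirling estimate (iii) is required, and one must make sure its error term is $o(r)$. Since the error is $O(\log r)$ and $r\to\infty$, this holds.
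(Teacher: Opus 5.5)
Your $r\to\infty$ step has a gap. You justify it by saying that if $r$ were bounded, the left inequality would force $\log(n/r)=O(1)$. That is backwards. The left inequality $\frac{r-1}{2}\log b-1\le\log(n/r)$ only bounds $r$ from above. If $r$ stays bounded, its left side is $O(1)$ while $\log(n/r)\to\infty$, so the inequality is satisfied, not violated. Bounded $r$ would contradict the right inequality, but that is the one you are proving at that moment, so the argument is circular as written.

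The fix is easy, in any of three ways:
\begin{itemize}
\item As in the paper: $\mu_s(n)\to\infty$ for every fixed $s$, so $r(n)\ge s$ eventually.
\item More quantitatively: $\binom ns\ge (n/s)^s$ gives $\mu_s\ge\bigl((n/s)b^{-(s-1)/2}\bigr)^s\ge (n^{1/2}/s)^s>1$ for $s=\lfloor\log_b n\rfloor$, so $r\ge\lfloor\log_b n\rfloor$ and $r=\Theta(\log n)$.
\item From your undivided inequality: for bounded $r$ its left side is at least $\log\frac{n}{r+1}-O(1)\to\infty$, while its right side stays bounded.
\end{itemize}
The second version also repairs a smaller slip. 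After bounding the error in (iii) by $O(\log\log n)$ and dividing by $r+1$, you get $O(\log\log n/r)$. That is $o(1)$ only if $r\gg\log\log n$, and mere $r\to\infty$ does not suffice. Your stated error $O(\log\log n/\log n)$ needs $r=\Omega(\log n)$. Alternatively, keep the error as $O\bigl((r+1)^2/n+\log(r+2)\bigr)$; after division this is $O(\log r/r)+o(1)$, which is $o(1)$ once $r\to\infty$.

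With that repaired, your proof is correct. It follows the paper for the lower bound, the exponentiation, and the bootstrapping for $r=2\log_b n-2\log_b\log_b n+O(1)$. It differs only in the upper bound. The paper does not use the Stirling estimate (iii). It uses the elementary bound $\binom n{r+1}\ge\bigl((n-r)/(r+1)\bigr)^{r+1}$, which with $\mu_{r+1}<1$ gives $\log\frac{n-r}{r+1}<\frac r2\log b$. It then notes $\log\frac{n-r}{r+1}=\log\frac nr+o(1)$, using $r\to\infty$ and $r/n\to0$. So your remark that (iii) is required is not right. Your route buys the sharper bound $\log\frac nr\le\frac r2\log b-1+o(1)$, which nearly matches the lower bound, but the lemma only needs $+o(1)$. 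The paper's route gets that with less machinery.
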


\begin{proof}
First, for every fixed positive integer $s$,
$\mu_s(n)=\binom nsq^{\binom s2}\to\infty$ as $n\to\infty$.
The definition of $r$ therefore implies $r(n)\to\infty$.
Also $\mu_n=q^{\binom n2}<1$ for $n\ge2$, so $r<n$ and
$\mu_{r+1}<1$ is well defined.
Since $\mu_r\ge1$, we have
\[
 1\le\binom nr b^{-\binom r2}
 \le\left(\frac{en}{r}\right)^r b^{-r(r-1)/2},
\]
which gives the lower bound in \eqref{eq:scale-bounds}. In particular,
$(r-1)\log b/2\le\log n+1$, so $r=O(\log n)$ and $r/n\to0$.
Since
$\mu_{r+1}<1$ and
\[
 \binom n{r+1}\ge
 \left(\frac{n-r}{r+1}\right)^{r+1},
\]
we obtain
\[
 \log\frac{n-r}{r+1}<\frac r2\log b.
\]
 Since $r\to\infty$ and $r/n\to0$,
\[
 \log\frac{n-r}{r+1}-\log\frac nr
 =\log(1-r/n)-\log(1+1/r)=o(1).
\]
This proves the upper bound. Exponentiation gives
$b^r=\Theta((n/r)^2)$, and then
$r=2\log_b(n/r)+O(1)$ gives the displayed expansion for $r$.
\end{proof}

For a graph $G$, let $a=\alpha(G)$. Define 
\begin{equation}\label{eq:rho-definition}
 \rho_a(G)=\max\{|W|:W\subseteq V(G),\ c(G[W])=a\}.
\end{equation}

\begin{lemma}\label[lemma]{lem:endpoint-forest}
Set $a=\alpha(G(n,p))$ and $\rho_a=\rho_a(G(n,p))$. With high probability, $r-1\leq a\leq r+1$ and $a\leq\rho_a\leq a+1$.
\end{lemma}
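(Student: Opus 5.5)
The plan is to handle the two assertions separately. In both, the first moment method controls sets that are too large, and the second moment method, via \cref{lem:overlap-sum}, supplies the lower bounds. The scale relation \eqref{eq:br-scale}, $b^r=\Theta(n^2/r^2)$, drives every computation through the ratio
\[
 \frac{\mu_{s+1}}{\mu_s}=\frac{n-s}{s+1}\,q^{s}.
\]
For $s=r+O(1)$ this ratio is $\Theta(r/n)$. Hence $\mu_r\ge1$ together with $\mu_{r+1}<1$ gives $\mu_{r-1}=\Omega(n/r)$, $\mu_r=O(n/r)$ and $\mu_{r+2}=O(r/n)$.

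For $r-1\le a\le r+1$, the upper bound is immediate: $\PP(\alpha\ge r+2)\le\mu_{r+2}=O(r/n)=o(1)$. For the lower bound, let $X_t$ be the number of independent $t$-sets. The standard second-moment identity gives
\[
 \frac{\operatorname{Var}X_t}{(\EE X_t)^2}\le\sum_{j=2}^{t-1}T_j^{(t)}+\mu_t^{-1},
\]
since the terms $j=0,1$ contribute nothing positive and $j=t$ gives $\mu_t^{-1}$. At $t=r-1$ we have $\mu_{r-1}\to\infty$, so $\log^+(\mu_t^{-1})=0$. Also $t=(2+o(1))\log_b n$ by \eqref{eq:ratio-nu-mu}. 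So \cref{lem:overlap-sum} applies, and Chebyshev yields $X_{r-1}>0$ whp, i.e.\ $a\ge r-1$. I will reuse the same argument at $t=r+1$: whenever $\mu_{r+1}\ge(\log n)^{-10}$, we get $\log^+(\mu_{r+1}^{-1})=O(\log\log n)=o(\log n)$, and therefore $a\ge r+1$ whp.

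For $a\le\rho_a\le a+1$, the lower bound holds deterministically, by taking $W$ to be a maximum independent set. For the upper bound, suppose $c(G[W])=a$ and $|W|\ge a+2$. I will extract a set $W'\subseteq W$ with $|W'|=a+2$ and $c(G[W'])=a$. Pick one vertex from each component. Then either some component has at least $3$ vertices, and we add two further vertices of it forming a connected induced subgraph on $3$ vertices (a path or triangle), or two components have at least $2$ vertices, and we add one neighbour in each. In every case $G[W']$ is $a+2$ vertices spanning $2$ or $3$ edges, with all remaining vertices isolated. For a fixed size $s$, the expected number of $s$-sets inducing exactly $2$ or $3$ edges is at most
\[
 \mu_s\binom{\binom s2}{2}(p/q)^2+\mu_s\binom{\binom s2}{3}(p/q)^3=\mu_s\,(\log n)^{O(1)}.
\]
Since $a\ge r-1$ whp, only $s=a+2\in\{r+1,r+2,r+3\}$ matters. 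For $s\ge r+2$ the bound is $O(r/n)(\log n)^{O(1)}=o(1)$.

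The main obstacle is $s=r+1$, which occurs only when $a=r-1$. There $\mu_{r+1}<1$ alone does not kill the polylogarithmic factor. I will split on $\mu_{r+1}$:
\begin{itemize}[label=\textup{(\alph*)}]
 \item[\textup{(a)}] If $\mu_{r+1}<(\log n)^{-10}$, then the expectation $\mu_{r+1}(\log n)^{O(1)}$ is $o(1)$, provided the exponent hidden in $O(1)$, which is at most about $6$, is below $10$.
 \item[\textup{(b)}] Otherwise the second-moment argument above gives $a\ge r+1$ whp, so $a=r-1$ fails whp and only $s\ge r+3$ remains.
\end{itemize}
The case split is made along an arbitrary subsequence of $n$, and each branch gives probability $o(1)$, so the bound holds along the full sequence. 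Combining the three sizes $s$ and a union bound over them completes the argument.
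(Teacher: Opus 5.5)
Everything up to case (b) is sound. That includes the localization $r-1\le a\le r+1$, the extraction of an $(a+2)$-set inducing a $P_3$, a triangle, or two disjoint edges plus isolated vertices, the bound $\mu_s(\log n)^{O(1)}$ (your analogue of the paper's estimate for $Z_t$), and case (a).

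\textbf{The gap is in case (b).} It rests on your claim that the second-moment argument at $t=r+1$ gives $a\ge r+1$ whp. Verifying $\log^+(\mu_{r+1}^{-1})=o(\log n)$ only establishes the hypothesis of \cref{lem:overlap-sum}, that is, that the overlap sum is $o(1)$. Your own variance bound still contains the diagonal term $\mu_{r+1}^{-1}$. Since $\mu_{r+1}<1$ by the definition of $r$, this term exceeds $1$, and Chebyshev yields nothing. The claim is in fact false: Markov gives $\PP(a\ge r+1)\le\mu_{r+1}$, and in case (b) this can be as small as $(\log n)^{-10}$. So case (b) does not exclude $a=r-1$, and the dangerous size $s=r+1$ is left uncontrolled there.

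\textbf{The fix is to run the second moment at $t=r$ instead.} In case (b),
\[
 \mu_r=\mu_{r+1}\cdot\Theta(n/r)\ge(\log n)^{-10}\,\Theta(n/\log n)\to\infty,
\]
and $\mu_r\ge1$ gives $\log^+(\mu_r^{-1})=0$. Hence \cref{lem:overlap-sum} and Chebyshev give $X_r>0$ whp, i.e.\ $a\ge r$. This rules out $a=r-1$ and leaves $s\in\{r+2,r+3\}$, not merely $s\ge r+3$ as you wrote. Your first-moment bound already covers both of these sizes.

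With this change your argument is essentially the paper's. The paper splits directly on whether $\mu_r$ stays bounded or tends to infinity. If $\mu_r$ is bounded, then $\mu_{r+1}=O(r/n)$ and the first moment at size $r+1$ is $o(1)$. If $\mu_r\to\infty$, the second moment for $X_r$ excludes $a=r-1$.
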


\begin{proof} We first show that  $r-1\leq a\leq r+1$ with high probability. 
Using \eqref{eq:br-scale} of \cref{lem:canonical-scale}, we can obtain
\begin{equation}\label{eq:neighboring-means}
 \frac{\mu_{r-1}}{\mu_r}=\Theta(n/r),\qquad
 \frac{\mu_{r+2}}{\mu_{r+1}}=\Theta(r/n).
\end{equation}
Hence
\begin{equation}\label{eq:neighboring-consequences}
 \mu_{r-1}\to\infty,\qquad \mu_{r+2}\to0.
\end{equation}

Let $X_{r+2}$ be the number of  independent sets of order $r+2$ in $G(n,p)$. Then 
$\EE X_{r+2}=\mu_{r+2}\to0$. Using Markov's inequality we have 
$\mathbb{P} (X_{r+2}\geq1)\leq\EE X_{r+2}\rightarrow0$. It follows that $a\leq r+1$ with high probability.

Let $X_{r-1}=\sum_A\xi_A$, where the sum is over all $r-1$-subsets of $[n]$ and
$\xi_A$ is the indicator that $A$ is independent.  Thus
$\EE X_{r-1}=\mu_{r-1}$.  Clearly, if two distinct $r-1$-sets $A,B$ have at most one common
vertex,  the covariance of $\xi_A$ and $\xi_B$  will be zero.  If
$|A\cap B|=j\ge2$, then
\[
 \EE(\xi_A\xi_B)
 =q^{2\binom{r-1}{2}-\binom j2}
 =q^{2\binom{r-1}{2}}b^{\binom j2}.
\]
Write the variance of the indicator sum as
\[
 \operatorname{Var}X_{r-1}
 =\sum_A\operatorname{Var}\xi_A
  +\sum_{\substack{A,B\in\binom{[n]}{r-1}\\A\ne B}}
    \operatorname{Cov}(\xi_A,\xi_B),
\]
where the second sum is over ordered pairs.  Since $\xi_A^2=\xi_A$, we have $ \operatorname{Var}\xi_A\leq\EE\xi_A$. Thus, using $\operatorname{Cov}(\xi_A,\xi_B)\leq\mathbb{E}(\xi_A\xi_B)$,
\begin{align*}
 \operatorname{Var}X_{r-1}
 &\le \sum_A\EE\xi_A
 +\sum_{j=2}^{r-2}
   \binom{n}{r-1}\binom{r-1}{j}\binom{n-r+1}{r-1-j}
   q^{2\binom{r-1}{2}} b^{\binom j2}.
\end{align*}
Since $\sum_A\EE\xi_A=\mu_{r-1}$ and
$\mu_{r-1}^2=\binom{n}{r-1}^2q^{2\binom{r-1}{2}}$, we have
\[
 \frac{\operatorname{Var}X_{r-1}}{\mu_{r-1}^2}
 \le\frac1\mu_{r-1}+
 \sum_{j=2}^{r-2}
 \frac{\binom{r-1}{j}\binom{n-r+1}{r-1-j}}{\binom{n}{r-1}}
 b^{\binom j2}
 =\frac1\mu_{r-1}+\sum_{j=2}^{r-2}T_j^{(r-1)}.
\]
The first term tends to zero because $\mu_{r-1}\to\infty$, and the sum is
$o(1)$ by \cref{lem:overlap-sum}.  Hence
\[
 \frac{\operatorname{Var}X_{r-1}}{\mu_{r-1}^2}=o(1).
\]
Using Chebyshev's inequality, we have
\[
 \PP\left(X_{r-1}=0\right)\leq
 \PP\left(\left|X_{r-1}-\mu_{r-1}\right|\ge\mu_{r-1}\right)
 \le \frac{\operatorname{Var}X_{r-1}}{\mu_{r-1}^2}=o(1).
\]
It follows that
\[
 \PP(X_{r-1}>0)=1-o(1).
\]
Therefore, $a\geq r-1$ with high probability.
Consequently, $r-1\leq a\leq r+1$ with high probability.

We next prove $a\leq\rho_a\leq a+1$ with high probability.  The lower
bound is deterministic: a maximum independent set has $a$ vertices and its
induced graph has exactly $a$ components. We shall prove the upper bound in the following.

For each deterministic integer $t$, let $Z_t$ be the number of
$(t+2)$-subsets $W\subseteq[n]$ for which $c(G[W])=t$.  We first record why
these variables detect the event $\rho_a\ge a+2$.  Suppose that a set
$W_0$ satisfies $c(G[W_0])=a$ and $|W_0|\ge a+2$.  
If two components have order at least two, choose the endpoints of one edge
from each of them and one vertex from every other component.  If instead one
component has order at least three, choose three vertices that induce a
connected graph in that component: take three consecutive vertices of a
shortest path between two vertices at distance at least two; if the component
is complete, any three vertices work.  Also choose one vertex from every other
component.  In either case the selected vertices form a set $W\subseteq W_0$
with
\[
 |W|=a+2,\qquad c(G[W])=a.
\]
Consequently,
\begin{equation}\label{eq:rho-detected-by-Z}
 \{\rho_a\ge a+2\}\subseteq\{Z_a>0\}.
\end{equation}

We now estimate $Z_t$ without treating the random variable $a$ as a
deterministic parameter.  A graph on $t+2$ vertices with exactly $t$
components has one of only two component-order profiles:
\[
 3,1,\ldots,1
 \quad\hbox{or}\quad
 2,2,1,\ldots,1.
\]
For a fixed $(t+2)$-set, the vertices in the nontrivial components can be
chosen in at most $C(t+2)^4$ ways.  Once they are chosen, all edges between
distinct components must be absent.  There are
$\binom{t+2}{2}-3$ such forced nonedges in the first profile and
$\binom{t+2}{2}-2$ in the second.  The probabilities of the required
internal edges are at most one.  Since $p$ and $q$ are fixed, a union bound
therefore gives, for a constant $C_p$ depending only on $p$,
\begin{equation}\label{eq:Zt-first-moment}
 \EE Z_t\le
 C_p t^4\binom n{t+2}q^{\binom{t+2}{2}}.
\end{equation}

We apply this estimate for the three possible values of $a$.  First, since
$\mu_{r+1}<1$, \eqref{eq:Zt-first-moment}, the exact ratio of consecutive
binomial coefficients, and \eqref{eq:br-scale} yield
\begin{align}
 \EE Z_{r+1}
 &\le C_pr^4\mu_{r+1}
   \frac{\binom n{r+3}}{\binom n{r+1}}q^{2r+3}\notag\\
 &\le C'_pr^4\frac{n^2}{r^2}b^{-2r}
 =O\!\left(\frac{r^6}{n^2}\right)=o(1).
 \label{eq:Z-r-plus-one}
\end{align}
Similarly,
\begin{align}
 \EE Z_r
 &\le C_pr^4\mu_{r+1}
   \frac{\binom n{r+2}}{\binom n{r+1}}q^{r+1}\notag\\
 &\le C'_pr^4\frac nr b^{-r}
 =O\!\left(\frac{r^5}{n}\right)=o(1).
 \label{eq:Z-r}
\end{align}
Here and below we use $r=O(\log n)$, which follows from
\cref{lem:canonical-scale}.  Markov's inequality shows that
$Z_{r+1}=Z_r=0$ with high probability.

The value $a=r-1$ requires a short dichotomy because $\mu_r$ need not have
a limit along the full sequence of integers $n$.  Along any subsequence on
which $\mu_r$ is bounded by a constant, \eqref{eq:Zt-first-moment} gives
\begin{align}
 \EE Z_{r-1}
 &\le C_pr^4\mu_r
   \frac{\binom n{r+1}}{\binom nr}q^r\notag\\
 &\le C'_pr^4\frac nr b^{-r}
 =O\!\left(\frac{r^5}{n}\right)=o(1).
 \label{eq:Z-r-minus-one}
\end{align}
Thus $Z_{r-1}=0$ with high probability on every such subsequence.

Along any subsequence on which $\mu_r\to\infty$, let
$X_r=\sum_A\zeta_A$, where the sum is over all $r$-subsets of $[n]$ and
$\zeta_A$ indicates that $A$ is independent.  Repeating the covariance
calculation already given for $X_{r-1}$ gives
\[
 \frac{\operatorname{Var}X_r}{\mu_r^2}
 \le\frac1{\mu_r}+
 \sum_{j=2}^{r-1}
 \frac{\binom rj\binom{n-r}{r-j}}{\binom nr}
 b^{\binom j2}
 =\frac1{\mu_r}+\sum_{j=2}^{r-1}T_j^{(r)}.
\]
The hypotheses of \cref{lem:overlap-sum} hold: by
\cref{lem:canonical-scale}, $r=(2+o(1))\log_b n$, and
$\log^+(\mu_r^{-1})=0$ because $\mu_r\ge1$.  Hence the overlap sum is
$o(1)$, while $1/\mu_r=o(1)$ on the present subsequence.  Chebyshev's
inequality now gives
\[
 \PP(X_r=0)
 \le\PP\bigl(|X_r-\mu_r|\ge\mu_r\bigr)
 \le\frac{\operatorname{Var}X_r}{\mu_r^2}=o(1).
\]
Since $a=r-1$ implies $X_r=0$, we obtain
\begin{equation}\label{eq:a-r-minus-one-unlikely}
 \PP(a=r-1)\le\PP(X_r=0)=o(1)
\end{equation}
on every subsequence on which $\mu_r\to\infty$.

For completeness, these two cases cover a possibly oscillating sequence
$\mu_r$.  Indeed, every subsequence has a further subsequence on which
$\mu_r$ is bounded, or else a further subsequence on which
$\mu_r\to\infty$.  On the first type, \eqref{eq:Z-r-minus-one} rules out
$Z_{r-1}>0$; on the second type, \eqref{eq:a-r-minus-one-unlikely} rules
out $a=r-1$.  It follows by the subsequence criterion for convergence that
\[
 \PP(a=r-1\ \hbox{and}\ Z_{r-1}>0)=o(1)
\]
along the full sequence.  Combining this with
\eqref{eq:rho-detected-by-Z}, \eqref{eq:Z-r-plus-one},
\eqref{eq:Z-r}, and the already proved localization
$a\in\{r-1,r,r+1\}$ shows that
$\PP(\rho_a\ge a+2)=o(1)$.  Therefore
$a\le\rho_a\le a+1$ with high probability.
\end{proof}

\begin{proof}[Proof of \cref{thm:main-asymptotic}]
 Set $k=\kappa(G(n,p))$. By \eqref{eq:standard-random-parameters}, with probability $1-o(1)$, 
\[
 \log_b n\le a\le 3\log_b n,
 \qquad k=(p+o(1))n.
\]
Using \cref{lem:sparse-interface} (with
$\theta=p/3$ and $C=4$) and 
\cref{lem:endpoint-forest}, we see that, with probability $1-o(1)$, $G(n,p)$ has the following property:\\
(a1) for any $U\subseteq[n]$ with $
 \theta\log_b n\le |U|\le C\log_b n$. \[
 |B(U)|\le n^{1-\gamma}=o(n),
\]
where $\gamma>0$ is independent of $n$;\\
 (a2) 
\[
 a\leq \rho_{a}\leq a+1,
\]
where $\rho_{a}=\rho_{a}(G(n,p))$.
 
We restrict $G(n,p)$ on the above events with probability $1-o(1)$, and denote $G=G(n,p)$. Clearly, $G$ is connected and non-complete, since $k>0$
and $a\ge2$.  Its toughness is well defined.  
Let $S$ be a cut set of $G$ such that $\tau(G)=\frac{|S|}{c(G-S)}$.  Set
\[
 W=V(G)\setminus S,\qquad w=|W|,
 \qquad c=c(G[W]).
\]

Now we show that $pa/2<c\le a$.  Choosing one vertex from each component
of $G[W]$ produces an independent set of order $c$, so $c\le\alpha(G)=a$.
Suppose, for a contradiction, that $c\le pa/2$.
Then, noting $|S|\ge k$,
\[
 \tau(G)=\frac{|S|}{c}
 \ge\frac{k}{c}
 \ge\frac{k}{pa/2}
 =(2-o(1))\frac na>\frac{n}{a}.
\]
For all sufficiently large $n$, the last expression is larger than $n/a$.
This is impossible, since $\tau(G)\leq(n-a)/a<n/a$ by the elementary
estimate. Thus, $pa/2<c\le a$.

Recall $pa/2<c\le a$.
Let $U$ be an independent set obtained by choosing one representative from each component of $G[W]$.  Then
$|U|=c$.  Note that
\[
 \theta\log_b n\le c\le C\log_b n
\] as  $c>pa/2$.
Thus by (a1), noting $W\subseteq U\cup B(U)$,
\begin{equation}\label{eq:coarse-r}
 w=|W|\le |U|+|B(U)|\le c+n^{1-\gamma}.
\end{equation}

Suppose first that $c\le a-1$. Recall $a=O(\log n)$. By
\eqref{eq:coarse-r}, for all sufficiently large $n$,
\[
 a(w-c)\le a n^{1-\gamma}
 =O(\log n)n^{1-\gamma}<n\le(a-c)n.
\]
Rearranging the strict inequality gives
$a(n-w)>c(n-a)$, and division by $ac>0$ gives
\begin{equation}\label{eq:strict-gap}
 \tau(G)=\frac{n-w}{c}>\frac{n-a}{a},
\end{equation}
a contradiction. 

It remains to consider $c=a$.  Now $U$ is a maximum independent set of
$G$, and $\tau(G)=(n-w)/a$.  Since $c(G[W])=a$, the definition of
$\rho_a$ gives $w\le\rho_a$.  Conversely, choose a set $W^*$ of order
$\rho_a$ with $c(G[W^*])=a$.  Its complement is an admissible cut set, so
the minimality in the definition of toughness gives
\[
 \tau(G)\le\frac{n-\rho_a}{a}\le\frac{n-w}{a}=\tau(G).
\]
Both inequalities are therefore equalities.  In particular, $w=\rho_a$.
Using (a2), we conclude that
\[
 \tau(G)=\frac{n-\rho_a}{a}
 \in\left\{\frac{n-a}{a},\frac{n-a-1}{a}\right\}.
\]
This completes the proof.
\end{proof}

\section{Proof of \texorpdfstring{\cref{thm:exact-instability}}{Theorem 1.2}}

In this section, we give the proof of \cref{thm:exact-instability}.  
For integers $n\ge m$, let
\[
 \mu_m(n)=\binom nm q^{\binom m2},
 \qquad q=1-p.
\]
We first show the following lemma.

\begin{lemma}\label[lemma]{lem:one-edge-second-moment}
Fix $p\in(0,1)$.  Suppose $m\to\infty$, $n=n(m)$, and
\begin{equation}\label{eq:critical-mu}
 \mu_m(n)\sim\frac1m.
\end{equation}
If $Y_m$ denotes the number of $m$-vertex sets inducing exactly one edge in $G(n,p)$, then
$$\PP(Y_m>0)\to1.$$
\end{lemma}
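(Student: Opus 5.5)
We use the second moment method on $Y_m=\sum_{(A,e)}\eta_{A,e}$, where the sum runs over pairs consisting of an $m$-set $A\subseteq[n]$ and a pair $e\in\binom A2$, and $\eta_{A,e}$ indicates that $G[A]$ has edge set exactly $\{e\}$. Distinct pairs with the same $A$ are disjoint events, so this sum counts $m$-sets inducing exactly one edge. The first moment is
\[
 \EE Y_m=\binom nm\binom m2 p\,q^{\binom m2-1}=\frac pq\binom m2\mu_m(n)\sim\frac{p}{2q}\,m\to\infty
\]
by \eqref{eq:critical-mu}. We also need the scale of $m$ versus $n$. From $\mu_m(n)\sim 1/m$ and \cref{lem:elementary-estimates}(iii), we get $m=(2+o(1))\log_b n$, hence $b^m=\Theta(n^2/m^2)$ up to polynomial-in-$m$ factors, exactly as in \cref{lem:canonical-scale}. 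Chebyshev then reduces the lemma to showing $\operatorname{Var}Y_m=o((\EE Y_m)^2)$.

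For the variance, we split ordered pairs $((A,e),(B,f))$ by $j=|A\cap B|$. If $j\le1$, the two indicators depend on disjoint edge sets, so the covariance vanishes. For $j=m$ we have $A=B$, and these terms contribute at most $\EE Y_m=o((\EE Y_m)^2)$. For $2\le j\le m-1$, we bound $\EE(\eta_{A,e}\eta_{B,f})$ as follows. The union $A\cup B$ must contain all required non-edges except those in $\binom{A\cap B}2$, which are counted once. Thus the probability is at most $p^{-O(1)}q^{2\binom m2-\binom j2-2}$, with the constant covering the at most two prescribed edges and the possible coincidence $e=f\subseteq A\cap B$. Summing over the $\binom m2^2$ choices of $(e,f)$ gives the bound
\[
 \frac{\operatorname{Var}Y_m}{(\EE Y_m)^2}\le \frac1{\EE Y_m}+C_p\sum_{j=2}^{m-1}T_j^{(m)}.
\]
The factor $\binom m2^2$ cancels against $(\EE Y_m)^2$, and the ratio of hypergeometric counts is exactly $H_j^{(m)}$.

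It remains to apply \cref{lem:overlap-sum} with $t=m$. The hypotheses are $m=(2+o(1))\log_b n$, already noted, and $\log^+(\mu_m^{-1})=\log m+O(1)=O(\log\log n)=o(\log n)$, which holds by \eqref{eq:critical-mu}. The lemma gives $\sum_{j=2}^{m-1}T_j^{(m)}=o(1)$, and so $\PP(Y_m=0)\le\operatorname{Var}Y_m/(\EE Y_m)^2=o(1)$.

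The main obstacle is the bookkeeping in the covariance bound: checking that the one-edge constraint changes the joint probability only by a bounded factor, uniformly in $j$ and in how $e$ and $f$ sit relative to $A\cap B$. The worst case is when $e$ or $f$ lies inside $A\cap B$; then an edge is forced inside the intersection, and this only lowers the probability relative to the bound above. Once a uniform constant $C_p$ is secured, the remaining overlap analysis is inherited entirely from \cref{lem:overlap-sum}. Its large-overlap regime is precisely where the mild deficit $\mu_m\approx 1/m$ enters, and it is absorbed by the $n^{\varepsilon_n}$ slack in that proof.
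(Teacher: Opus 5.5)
Your proposal is correct and follows the paper's proof: the same first moment, the same second-moment reduction to $\frac{1}{\EE Y_m}+C_p\sum_{j=2}^{m-1}T_j^{(m)}$, closed by \cref{lem:overlap-sum}. The only differences are that you index indicators by pairs $(A,e)$ and use the cruder uniform bound $q^{2\binom m2-\binom j2-2}$ (giving $C_p=p^{-2}$) instead of the paper's exact two-case computation (giving $C_p=1+q/p$). One small omission: before invoking \cref{lem:elementary-estimates}(iii) you must check $m/n\to0$, which the paper does first via part (i), since $n\le Km$ would give $\mu_m(n)\le(eK)^mq^{\binom m2}=o(1/m)$.
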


\begin{proof}
Set $N=\binom m2$ and $b=q^{-1}$. We first show that $n/m\to\infty$.  Indeed, if
 $n\le Km$ for some constant
$K>1$, then
\[
 \mu_m(n)\le\left(\frac{en}{m}\right)^m q^{\binom m2}
 \le(eK)^m q^{\binom m2}
 =\exp\{-\tfrac12m^2\log b+O(m)\}=o(1/m),
\]
a contradiction to $\mu_m(n)\sim1/m$. Thus, $n/m\to\infty$ or $m/n\to0$.

Fix an $m$-set $A$.  Its induced graph has $N$ possible edges.  Clearly,
\[
 \PP(e(G[A])=1)=Np q^{N-1}.
\]
There are $\binom nm$ choices for $A$. Using
\eqref{eq:critical-mu}, we have 
\begin{equation}\label{eq:EY}
 \EE Y_m
 =\binom nmNp q^{N-1}
 =\frac pqN\mu_m(n)
 \sim\frac{p}{2q}m\longrightarrow\infty.
\end{equation}

\smallskip

For each $A\in\binom{[n]}m$, let $I_A$ denote the indicator of the event
$e(G[A])=1$.
Then $Y_m=\sum_A I_A$.   Let $A$ and $B$ be two distinct $m$-sets with
$|A\cap B|=j$, and set $L=\binom j2$.  There are exactly two
compatible possibilities for $I_A=I_B=1$:
\begin{enumerate}[label=\textup{(\alph*)}]
 \item their unique edge is the same edge inside $A\cap B$, giving $L$ choices;
 \item their unique edges are distinct and lie outside $A\cap B$, giving $(N-L)^2$ choices.
\end{enumerate}
Thus,
\begin{align}
 &\PP(e(G[A])=e(G[B])=1)\notag\\
 &\quad=Lp q^{2N-L-1}+(N-L)^2p^2q^{2N-L-2}.
\end{align}
After division by $(Np q^{N-1})^2$, the first and second terms
become
$
 \frac{qL}{pN^2}b^L
 $ and
 $\left(1-\frac LN\right)^2b^L$, respectively.
Since $0\le L\le N$, their sum is at most
$C_p b^L$ with $C_p=1+q/p$. Hence,
\begin{align}
 &\PP(e(G[A])=e(G[B])=1)
 \leq C_p b^{L}(Np q^{N-1})^2.
\end{align}

\smallskip

When $|A\cap B|=0,1$, then events $I_A=1$ and $I_B=1$ are independent.  So, their covariance is zero.  For $2\le j\le m-1$ and a fixed $m$-set $A$, the number of
sets $B$ with $|A\cap B|=j$ is
$\binom mj\binom{n-m}{m-j}$.  Write the variance of the indicator sum as
\[
 \operatorname{Var}Y_m
 =\sum_A\operatorname{Var}I_A
  +\sum_{\substack{A,B\in\binom{[n]}m\\A\ne B}}
    \operatorname{Cov}(I_A,I_B),
\]
where the second sum is over ordered pairs.  Since $I_A^2=I_A$,
\[
 \operatorname{Var}I_A
 =\EE I_A-(\EE I_A)^2\le\EE I_A.
\]
Thus the diagonal terms contribute at most
$\sum_A\EE I_A=\EE Y_m$.  For the remaining terms we use
$$\operatorname{Cov}(I_A,I_B)\le\EE(I_AI_B)=\PP(e(G[A])=e(G[B])=1)
 \leq C_p b^{L}(Np q^{N-1})^2.$$
   Dividing the resulting ordered-pair sum by
$(\EE Y_m)^2=\binom nm^2(Np q^{N-1})^2$ gives
\begin{equation}\label{eq:variance-overlap-sum}
 \frac{\operatorname{Var}Y_m}{(\EE Y_m)^2}
 \le\frac1{\EE Y_m}
 +C_p\sum_{j=2}^{m-1}
 \frac{\binom mj\binom{n-m}{m-j}}{\binom nm}
 b^{\binom j2}.
\end{equation}

\smallskip

Since $m/n\to0$, we may apply
\cref{lem:elementary-estimates}(iii) with $N=n$ and $r=m$.  This gives
\[
 \log\binom nm
 =m\log\frac nm+m
  +O\!\left(\frac{m^2}{n}+\log m\right).
\]
Since
$
 \frac{m^2/n}{m}=\frac mn\longrightarrow0
 $
 and $
 \frac{\log m}{m}\longrightarrow0,
$
the entire error is $o(m)$.  Together with
\eqref{eq:critical-mu}, this gives
\[
 -\log m+o(1)
 =\log\mu_m(n)
 =m\log\frac nm+m-\frac{m(m-1)}2\log b+o(m),
\]
After division by $m$,
\[
 \log\frac nm
 =\frac{m-1}{2}\log b-1+o(1).
\]
Then
\begin{align*}
 \log n
 &=\frac{m-1}{2}\log b-1+\log m+o(1)\\
 &=\frac m2\log b+o(m).
\end{align*}
  It follows that
\begin{equation}\label{eq:m-logn-relation}
 m=(2+o(1))\log_b n,\qquad b^m=n^{2+o(1)}.
\end{equation}
Let
\[
 T_j=\frac{\binom mj\binom{n-m}{m-j}}{\binom nm}
 b^{\binom j2}
\] for $2\leq j\leq m-1$.
We verify explicitly that \cref{lem:overlap-sum} applies with $t=m$.
The first relation in \eqref{eq:m-logn-relation} gives
$m=(2+o(1))\log_b n$.  Moreover, by \eqref{eq:critical-mu},
\[
 \log^+\!\bigl(\mu_m(n)^{-1}\bigr)
 =\log m+o(1)=o(\log n),
\]
where the last equality follows from $\log n=(\tfrac12\log b+o(1))m$.
Thus all the hypotheses of \cref{lem:overlap-sum} are satisfied, and
$\sum_{j=2}^{m-1}T_j=o(1)$. Combining this with \eqref{eq:EY} and
\eqref{eq:variance-overlap-sum}, we obtain
\[
 \frac{\operatorname{Var}Y_m}{(\EE Y_m)^2}=o(1).
\]
 Using Chebyshev's inequality, we have
\[
 \PP(Y_m=0)
 \le\PP\bigl(|Y_m-\EE Y_m|\ge\EE Y_m\bigr)
 \le\frac{\operatorname{Var}Y_m}{(\EE Y_m)^2}=o(1).
\]
This completes the proof.
\end{proof}

Now we are ready to prove \cref{thm:exact-instability}.

\begin{proof}[Proof of \cref{thm:exact-instability}]
\smallskip
Set $q=1-p$. For each sufficiently large $m$, let $n_m$ be the least integer $n$ such that
\begin{equation}\label{eq:nm-definition}
 \mu_m(n)=\binom nmq^{\binom m2}\ge\frac1m.
\end{equation}
This integer exists.  Indeed, for fixed $m$, the function
$n\mapsto\binom nm$ is strictly increasing for integers $n\ge m$, and it
tends to infinity with $n$.  Hence $\mu_m(n)$ is strictly increasing and
tends to infinity.  On the other hand,
$\mu_m(m)=q^{\binom m2}<1/m$ for all sufficiently large $m$, so the level
$1/m$ is crossed at a finite integer $n_m>m$.  In particular,
$n_m\ge m$ and hence $n_m\to\infty$.

We next prove the stronger fact $n_m/m\to\infty$. In fact, for any fixed $K>1$,
using
\cref{lem:elementary-estimates}(i), we have 
\[
 \mu_m(\lfloor Km\rfloor)
 \le (eK)^m q^{\binom m2}=o(1/m).
\]
Thus, $n_m>Km$ for all sufficiently large
$m$.  Since $K$ was arbitrary, $n_m/m\to\infty$.

For $n=n_m$, noting $m/n\to0$, we have
\[
 \frac{\mu_m(n)}{\mu_m(n-1)}=\frac{n}{n-m}=1+o(1).
\]
 By the minimality of $n_m$,
\[
 \mu_m(n_m-1)<\frac1m\le\mu_m(n_m).
\]
Multiplying the strict inequality by
$\mu_m(n_m)/\mu_m(n_m-1)=n_m/(n_m-m)$ gives
\[
 \frac1m\le\mu_m(n_m)
 <\frac1m\frac{n_m}{n_m-m}
 =\frac{1+o(1)}m.
\]
Therefore
\begin{equation}\label{eq:mu-asymptotic}
 \mu_m(n_m)\sim\frac1m.
\end{equation}

\smallskip

Let $X_m$ count the number of independent $m$-sets in $G(n_m,p)$. Then $\EE X_m=\mu_m(n_m)$.  Using Markov's inequality and \eqref{eq:mu-asymptotic}, we have
\[
 \PP(X_m>0)=\PP(X_m\geq1)\le\EE X_m=\mu_m(n_m)=o(1).
\]
This implies that 
\begin{equation}\label{eq:alpha-upper-m-1}
 \alpha(G(n_m,p))\le m-1
\end{equation}
with high probability. On the other hand, \eqref{eq:mu-asymptotic} verifies the hypothesis of
\cref{lem:one-edge-second-moment}.  Hence, with high probability, there is an
$m$-set $W$ inducing exactly one edge, say $xy$.  The set
$W\setminus\{x\}$ is independent. Thus $\alpha(G(n_m,p))\geq m-1$.  Together with
\eqref{eq:alpha-upper-m-1}, we have
\[
 \alpha(G(n_m,p))= m-1.
\]
Note that
$
 c(G[W])=m-1=\alpha(G(n_m,p)).
$
By \cref{lem:connected-noncomplete}, the graph $G(n_m,p)$ is connected and
non-complete with probability $1-o(1)$. Let $S$ be the complement of $W$. Noting that $c(G-S)=c(G[W])=m-1>1$, we have 
\[
 \tau(G)
 \le\frac{|S|}{c(G-S)}
 =\frac{n_m-m}{m-1}
 =\frac{n_m-(m-1)}{m-1}-\frac1{m-1}
 =\frac{n_m-\alpha(G)}{\alpha(G)}-\frac{1}{\alpha(G)}.
\]
All events above have probability $1-o(1)$; their intersection does as
well.  This completes the proof.
\end{proof}

\medskip

\medskip

\section*{Data availability}

No data were used for the research described in this article.

\section*{Competing Interests}
The authors declare that they have no known competing financial interests or
personal relationships that could have appeared to influence the work reported in
the paper.

\section*{Declaration on the Use of AI}
We used the ChatGPT for  proofreading, calculation checking, grammar checking, and language polishing.  The authors take full
responsibility for the correctness and originality of the paper.

\end{document}